\newtheorem{theorem}{Theorem}[section]
\newtheorem{lemma}[theorem]{Lemma}
\newtheorem{proposition}[theorem]{Proposition}
\theoremstyle{definition}
\renewcommand{\Re}{\operatorname{Re}}
\newcommand{\D}{\operatorname{\{s\in\mathbb{C}:1/2<\Re(s)<1\}}}
\newcommand{\meas}[1]{\liminf_{T\to\infty}\frac{1}{T}\operatorname{meas}\left\{\tau\in [0,T]: #1\right\}}
\newcommand{\measlog}[1]{\liminf_{T\to\infty}\frac{1}{T}
\operatorname{meas}\left\{\tau\in [2,T]: #1\right\}}
\begin{document}
\title{Joint universality for dependent $L$-functions}
\author{{\L}ukasz Pa\'nkowski}
\address{Faculty of Mathematics and Computer Science, Adam Mickiewicz University, Umultowska 87, 61-614 Pozna\'{n}, Poland, and Graduate School of Mathematics, Nagoya University, Nagoya, 464-8602, Japan}
\email{lpan@amu.edu.pl}

\thanks{The author is an International Research Fellow of the Japan Society for the Promotion of Science and the work was partially supported by (JSPS) KAKENHI grant no. 26004317 and the grant no. 2013/11/B/ST1/02799 from the National Science Centre.}

\subjclass[2010]{Primary: 11M41}

\keywords{joint universality, uniform distribution, $L$-function}

\begin{abstract}
We prove that, for arbitrary Dirichlet $L$-functions $L(s;\chi_1),\ldots,\break L(s;\chi_n)$ (including the case when $\chi_j$ is equivalent to $\chi_l$ for $j\ne k$), suitable shifts of type $L(s+i\alpha_jt^{a_j}\log^{b_j}t;\chi_j)$ can simultaneously approximate any given analytic functions on a simply connected compact subset of the right open half of the critical strip, provided the pairs $(a_j,b_j)$ are distinct and satisfy certain conditions. Moreover, we consider a discrete analogue of this problem where $t$ runs over the set of positive integers.
\end{abstract}

\maketitle

\section{Introduction}

In 1975, Voronin \cite{V} discovered a universality property for the Riemann zeta-function $\zeta(s)$, namely he proved that for every compact set $K\subset\D$ with connected complement, any non-vanishing continuous function $f(s)$ on $K$, analytic in the interior of $K$, and every $\varepsilon>0$ we have
\begin{equation}\label{eq:SR}
\meas{\max_{s\in K}|\zeta(s+i\tau)-f(s)|<\varepsilon}>0,
\end{equation}
where $\operatorname{meas}\{\cdot\}$ denotes the real Lebesgue measure. Moreover, in 1977,  Voronin \cite{V2} proved the so-called joint universality which, roughly speaking, states that any collection
of Dirichlet L-functions associated with non-equivalent characters can simultaneously and uniformly
approximate non-vanishing analytic functions in the above sense. In other words, in order to approximate a collection of non-vanishing continuous functions on some compact subset of $\D$ with connected complement, which are analytic in the interior, it is sufficient to take twists of the Riemann zeta function with non-equivalent Dirichlet characters. The requirement that characters are pairwise non-equivalent is necessary, since it is well-known that Dirichlet $L$-functions associated with equivalent characters differ from each other by a finite product and, in consequence, one cannot expect joint universality for them. This idea was extended by \v{S}le\v{z}evi\v{c}ien\.{e} \cite{Sl} to certain $L$-functions associated with multiplicative functions, by Laurin\v{c}ikas and Matsumoto \cite{LM} to $L$-functions associated with newforms twisted by non-equivalent characters, and by Steuding in \cite[Section 12.3]{S} to a wide class of $L$-functions with Euler product, which can be compared to the well-known Selberg class. Thus, one possible way to approximate a collection of analytic functions by a given $L$-function is to consider its twists with sufficiently many non-equivalent characters.

Another possibility to obtain a joint universality theorem by considering only one $L$-function was observed by Kaczorowski, Laurin\v{c}ikas and Steuding \cite{KLS}. They introduced the Shifts Universality Principle, which says that for every universal $L$-function $L(s)$, in the Voronin sense, and any distinct real numbers $
\lambda_1,\ldots,\lambda_n$ the functions $L(s+i\lambda_1),\ldots,L(s+i\lambda_n)$ are jointly universal for any compact set $K\subset\D$ satisfying $K_k\cap K_j=\emptyset$ for $1\leq k\ne j\leq n$, where $K_j=\{s+\lambda_j:s\in K\}$.

Next, one can go further and ask if there exists any other transformation of the Riemann zeta function, or a given $L$-function in general, to approximate arbitrary given collection of analytic functions. For example, we might consider a $L$-function, a compact set $K\subset\D$ with connected complement, non-vanishing continuous functions $f_1,\ldots,f_n$ on $K$, analytic in the interior of $K$, and ask for functions $\gamma_1,\ldots,\gamma_n\colon \mathbb{R}\to\mathbb{R}$ satisfying
\begin{equation}\label{eq:mainQuestion}
\meas{\max_{s\in K}|L(s+i\gamma_j(\tau))-f_j(s)|<\varepsilon}>0.
\end{equation}

Obviously, the Shifts Universality Principle gives a partial (under some restriction on $K$) answer for the simplest case when $\gamma_j(\tau) = \tau + \lambda_j$. The consideration for other linear functions $\gamma_j(\tau)=a_j\tau+b_j$ might be restricted, without loss of generality, to the case when $\gamma_j(\tau)=a_j\tau$, which was firstly investigated by Nakamura \cite{N09}, \cite{N11}. He proved that \eqref{eq:mainQuestion} holds, provided $\gamma_j(\tau)=a_j\tau$ with algebraic real numbers $a_1,\ldots,a_n$ linearly independent over $\mathbb{Q}$. Although Nakamura's result is the best known result concerning all positive integers $n$, the case $n=2$ is already much better understood, and from the work of the author and Nakamura (see \cite{N09, NP, P09, P16} we know that \eqref{eq:mainQuestion} holds if $\gamma_1(\tau)=a_1\tau$, $\gamma_2(\tau)=a_2\tau$ with non-zero real $a_1,a_2$ satisfying $a_1\ne \pm a_2$.

The main purpose of the paper is to find other example of functions $\gamma_1,\ldots,\gamma_n$ such that $\eqref{eq:mainQuestion}$ holds. Our approach is rather general and bases on Lemma \ref{lem:finiteProd} and Lemma \ref{lem:2kmomentsApp}, which are stated in the general form. However, we focus our attention only on the case when $\gamma_j(t)=\alpha_j t^{a_j}(\log t)^{b_j}$. The consideration when $a_j=a_k$ and $b_j=b_k$ for some $j\ne k$ is very similar to the already quoted work of the author and Nakamura for linear functions $\gamma(t)$ and requires an extra assumption on $\alpha_j,\alpha_k$, so in the sequel we assume that $a_j\ne a_k$ or $b_j\ne b_k$ for $j\ne k$. Moreover, for the sake of simplicity we will restrict ourselves only to Dirichlet $L$-functions, but it should be noted that our approach can be easily generalized to other $L$-functions (as in \cite{S}), at least in the strip where a good estimate for the second moment is known. On the other hand, we consider any collection of Dirichlet $L$-functions as an input instead of a single $L$-function. Hence, the following theorem gives an easy way how to approximate any collection of analytic functions by taking some shifts of any $L$-functions, even equal or dependent. 

\begin{theorem}\label{thm:univNonInt}
Assume that $\chi_1,\ldots,\chi_n$ are Dirichlet characters, $\alpha_1,\ldots,\alpha_n\in\mathbb{R}$, $a_1,\ldots,a_n$  non-negative real numbers and $b_1,\ldots,b_n$ such that
\[
b_j\in \begin{cases}
\mathbb{R}&\ \text{ if $a_j\not\in\mathbb{Z}$};\\
(-\infty,0]\cup (1,+\infty)&\ \text{ if $a_j\in\mathbb{N}$},
\end{cases}
\]
and $a_j\ne a_k$ or $b_j\ne b_k$ if $k\ne j$. Moreover, let $K\subset\D$ be a compact set with connected complement, $f_1,\ldots,f_n$ be non-vanishing continuous functions on $K$, analytic in the interior of $K$. Then, for every $\varepsilon>0$, we have
\begin{equation}\label{eq:mainNonInt}
\measlog{\max_{1\leq j\leq n}\max_{s\in K}|L(s+i\alpha_j\tau^{a_j}\log^{b_j}\tau;\chi_j)-f_j(s)|<\varepsilon}>0.
\end{equation}
\end{theorem}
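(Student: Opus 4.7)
The plan is to adapt the Bagchi--Voronin framework for joint universality, where the classical equidistribution of $\{\tau\log p/(2\pi)\}_p$ is replaced by joint equidistribution of the phases $\{\gamma_j(\tau)\log p/(2\pi)\}_{j,p}$ with $\gamma_j(\tau) = \alpha_j\tau^{a_j}\log^{b_j}\tau$; the distinctness of the pairs $(a_j,b_j)$ is the source of the required ``independence'' between the shifts, even when characters coincide. First I would carry out the standard reductions: by Mergelyan's theorem it suffices to assume each $f_j$ is a nowhere-zero polynomial on $K$, and using Lemma \ref{lem:finiteProd} we replace $L(s+i\gamma_j(\tau);\chi_j)$ by a truncated Euler product $L_M(s+i\gamma_j(\tau);\chi_j)$. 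The $2k$-th moment bound of Lemma \ref{lem:2kmomentsApp} then shows that for $M$ large the truncation error is smaller than $\varepsilon$ uniformly in $s\in K$ and $j$, except on a set of $\tau$ of arbitrarily small upper density. The problem therefore reduces to proving
\[
\measlog{\max_{1\leq j\leq n}\max_{s\in K}|L_M(s+i\gamma_j(\tau);\chi_j)-f_j(s)|<\varepsilon}>0.
\]

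The central ingredient is joint equidistribution of the vector
\[
\bigl(\gamma_j(\tau)\log p/(2\pi)\bigr)_{1\leq j\leq n,\, p\leq M}\pmod 1
\]
in $[0,1]^{n\pi(M)}$ as $\tau$ ranges over $[2,T]$ with $T\to\infty$. By Weyl's criterion this amounts, for every non-zero integer matrix $(k_{j,p})$, to the decay
\[
\frac{1}{T}\int_2^T \exp\!\Biggl(i\sum_{j,p} k_{j,p}\gamma_j(\tau)\log p\Biggr)\,d\tau \longrightarrow 0.
\]
Setting $c_j=\sum_{p\leq M}k_{j,p}\log p$, the $\mathbb{Q}$-linear independence of $\{\log p\}_p$ ensures not all $c_j$ vanish, and the hypothesis $(a_j,b_j)\neq(a_k,b_k)$ identifies a unique dominant term in the phase $\Phi(\tau)=\sum_j c_j\alpha_j\tau^{a_j}\log^{b_j}\tau$. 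The restrictions on $(a_j,b_j)$---in particular the exclusion of $b_j\in(0,1]$ when $a_j$ is a positive integer---are exactly those under which $\Phi'(\tau)$ is eventually monotone and of sufficient growth, so van der Corput / Kuzmin--Landau type estimates yield the required decay. Once joint equidistribution is established, a Pechersky-type rearrangement argument combined with the Hilbert space structure of analytic functions on $K$ produces a torus point at which the truncated Euler products simultaneously approximate $(\log f_1,\ldots,\log f_n)$, and equidistribution upgrades this to the positive-density statement; combining with the mean-square bound completes the proof.

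I expect the main obstacle to lie in the oscillatory integral analysis underlying the Weyl sum. When the growth rates of different $\gamma_j$ are close in magnitude---for instance when several $a_j$ coincide and only the $b_j$ distinguish them, or when one $a_j$ is the integer part of another---identifying the dominant phase and verifying that no subtle cancellation between the terms $c_j\alpha_j\tau^{a_j}\log^{b_j}\tau$ destroys monotonicity of $\Phi'$ demands delicate asymptotic analysis. This is precisely where the exclusion of $b_j\in(0,1]$ for integer $a_j$ becomes essential, since in that range $\Phi'$ can fail to have the monotone polynomial-logarithmic growth needed for standard stationary-phase estimates, and it is likely the most technically demanding step of the argument.
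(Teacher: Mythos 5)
Your overall decomposition matches the paper exactly: Mergelyan reduction, Lemma \ref{lem:finiteProd} to pass to a truncated Euler product, Lemma \ref{lem:2kmomentsApp} to control the truncation error in mean square, and then reduction of everything to proving that the curve $\bigl(\gamma_j(\tau)\log p/(2\pi)\bigr)_{j,p}$ is continuously uniformly distributed modulo $1$. You also correctly isolate the linear combination $\Phi(\tau)=\sum_j c_j\gamma_j(\tau)$ via Weyl's criterion and recognize that the $\mathbb{Q}$-linear independence of $\{\log p\}$ forces some $c_j\ne 0$, with the dominant $(a_j,b_j)$ controlling the behaviour. Where you genuinely diverge from the paper is in how the uniform distribution of $\Phi(\tau)$ is established. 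You propose to attack the Weyl integral $\tfrac1T\int_2^T e^{2\pi i\Phi(\tau)}\,d\tau$ head-on with van der Corput / Kuzmin--Landau estimates after locating the dominant term. The paper instead invokes Kuipers--Niederreiter Theorem 9.6 to reduce \emph{continuous} uniform distribution of $\Phi$ to \emph{discrete} uniform distribution of the sequence $(\Phi(nt))_{n\ge1}$ for almost every scaling parameter $t\in(0,1)$, and then applies the Fej\'er-type criterion (KN Theorem 3.5) to an appropriate derivative $g_t^{(\lceil a\rceil)}$ (or $g_t^{(\lceil a\rceil+1)}$ when $a\in\mathbb{N}$, $b>1$), with a separate almost-everywhere irrationality argument (KN Exercise 3.7) for the purely polynomial case $a\in\mathbb{N}$, $b=0$, where the limiting derivative is $t^a a!\,c_{j_0}\alpha_{j_0}$. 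Your direct oscillatory-integral route is arguably more natural for the continuous statement, and would also handle the polynomial case since a nonzero leading coefficient already gives a nonstationary or van-der-Corput-controllable phase in the continuous setting; the paper's detour through the discrete a.e.\ reduction has the advantage of exactly paralleling the argument needed later for Theorem \ref{thm:univDiscrete}. One case you do not explicitly address, and which the paper treats as a distinct branch, is $a_{j_0}\in\mathbb{N}$ with $b_{j_0}=0$; your sketch should acknowledge that there the phase is a genuine polynomial (no logarithmic factor), so the ``monotone derivative tending to zero'' mechanism you gesture at does not apply verbatim, even though van der Corput's second-derivative test still closes the continuous case. Finally, your diagnosis that the exclusion of $b_j\in(0,1]$ when $a_j\in\mathbb{N}$ is tied to the failure of the monotonicity/growth hypotheses of the Fej\'er-type criterion is precisely the reason the paper needs that restriction; your plan is sound, but be aware it is the choice of criterion (continuous van der Corput vs.\ KN Thm 9.6 plus Thm 3.5) that determines exactly which $(a_j,b_j)$ are excluded.
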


Next, let us consider the so-called discrete universality, which means that $\tau$ runs over the set of positive integers. It is an interesting problem, since usually discrete universality requires a special care for some $\alpha_j$. For example (see \cite{B} and \cite{R}) if $\gamma_1(k)=\alpha_1k$ and $n=1$, then the case when $\exp(2\pi k/\alpha_1)\in\mathbb{Q}$ for some integer $k$ is more subtle, since the set $\{\frac{\alpha_1\log p}{2\pi}:p\in\mathbb{P}\}\cup\{1\}$ is not linearly independent over $\mathbb{Q}$, which plays a crucial role in the proof. The case $n\geq 2$ for Dirichlet $L$-functions associated with non-equivalent characters and $\gamma_j(k)=\alpha_j k$ was investigated by Dubickas and Laurin\v{c}ikas in \cite{DL}, where they proved discrete joint universality under the assumption that
\begin{equation}\label{eq:linIndwith1}
\left\{\alpha_j\frac{\log p}{2\pi}:p\in\mathbb{P},\ j=1,2,\ldots,n\right\}\cup\{1\}\qquad\text{is linearly independent over $\mathbb{Q}$}.
\end{equation}
Moreover, very recently Laurin\v{c}ikas, Macaitien\.{e} and \v{S}iau\v{c}i\={u}nas \cite{LMS} showed that, for $\gamma_j(k)=\alpha_jk^a$ with $a\in (0,1)$, Dirichlet $L$-functions associated with non-equivalent characters are discretely jointly universal, provided
\begin{equation}\label{eq:linInd}
\left\{\alpha_j\frac{\log p}{2\pi}:p\in\mathbb{P},\ j=1,2,\ldots,n\right\}\qquad\text{is linearly independent over $\mathbb{Q}$}.
\end{equation}

Inspiring by their considerations we shall prove the following discrete version of Theorem \ref{thm:univNonInt}.

\begin{theorem}\label{thm:univDiscrete}
Assume that $\chi_1,\ldots,\chi_n$ are Dirichlet characters, $\alpha_1,\ldots,\alpha_n\in\mathbb{R}$, $a_1,\ldots,a_n$  non-negative real numbers and $b_1,\ldots,b_n$ such that
\[
b_j\in \begin{cases}
\mathbb{R}&\ \text{ if $a_j\not\in\mathbb{Z}$};\\
(-\infty,0]\cup (1,+\infty)&\ \text{ if $a_j\in\mathbb{N}$},
\end{cases}
\]
and $a_j\ne a_k$ or $b_j\ne b_k$ if $k\ne j$. Moreover, let $K\subset\D$ be a compact set with connected complement, $f_1,\ldots,f_n$ be non-vanishing continuous functions on $K$, analytic in the interior of $K$. Then, for every $\varepsilon>0$, we have
\begin{equation}\label{eq:mainDiscrete}
\liminf_{N\to\infty}\frac{1}{N}\sharp\left\{2\leq k\leq N: \max_{1\leq j\leq n}\max_{s\in K}|L(s+i\alpha_jk^{a_j}\log^{b_j}k;\chi_j)-f_j(s)|<\varepsilon\right\}>0.
\end{equation}
\end{theorem}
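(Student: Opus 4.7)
The proof mirrors that of Theorem \ref{thm:univNonInt}, with the Lebesgue mean $\tfrac{1}{T}\int_2^T dt$ replaced throughout by the counting mean $\tfrac{1}{N}\sum_{k=2}^N$. The first step is to establish discrete analogues of Lemma \ref{lem:finiteProd} and Lemma \ref{lem:2kmomentsApp} for the shifts $\tau_k=\alpha_jk^{a_j}\log^{b_j}k$ along integers $k\ge 2$. The former allows us to replace each $L(s+i\tau_k;\chi_j)$ by its truncated Euler product $L_M(s+i\tau_k;\chi_j)$ over primes $p\le M$, while the discrete mean-value estimate required by the latter can be obtained from its continuous counterpart via a Gallagher-type inequality.

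The heart of the argument is a joint uniform distribution statement on $\mathbb{T}^{n\pi(M)}$: for each fixed $M$, the sequence
\[
\left(\alpha_jk^{a_j}\log^{b_j}k\cdot\tfrac{\log p}{2\pi}\right)_{p\le M,\,1\le j\le n},\qquad k=2,3,\ldots,
\]
is equidistributed modulo one. By Weyl's criterion this reduces to proving
\[
\sum_{k=2}^N\exp\!\Big(i\sum_{j,p}h_{j,p}\,\alpha_jk^{a_j}\log^{b_j}k\,\log p\Big)=o(N)
\]
for every non-zero integer vector $(h_{j,p})$. Regrouping the inner sum by $j$ and using the $\mathbb{Q}$-linear independence of $\{\log p\}_{p\in\mathbb{P}}$, the exponent takes the form $\sum_{j=1}^n\beta_j\alpha_jk^{a_j}\log^{b_j}k$ with $\beta_j\ne 0$ for at least one $j$. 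The pairwise distinctness of the pairs $(a_j,b_j)$ then singles out a unique dominant summand, after which classical Weyl-type inequalities (for polynomial exponents) and van der Corput derivative tests (for the non-polynomial cases) deliver the required $o(N)$ bound.

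Given the uniform distribution, the remaining steps are routine. Mergelyan's theorem reduces the approximation of each $f_j$ to that of $\exp(g_j)$ for suitable polynomials $g_j$; the classical Bagchi--Pechersky rearrangement argument in an $H^2$-type Hilbert space, coupled with the non-vanishing of the $f_j$, then produces a subset of $\mathbb{T}^{n\pi(M)}$ of positive Haar measure on which the truncated products $L_M$ approximate the targets within $\varepsilon/2$. Transporting this back through the uniform distribution statement and absorbing the truncation error via the discrete mean-value bound yields \eqref{eq:mainDiscrete}.

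The main technical obstacle is the Weyl sum estimate in the integer-$a_j$ regime. Since $k^{a_j}$ is then itself integer-valued, the exponential $e(c\,k^{a_j}\log^{b_j}k)$ depends entirely on the logarithmic factor for equidistribution, and the excluded range $b_j\in(0,1]$ is precisely where that factor is too slowly varying for van der Corput to bite yet too weak to be absorbed into a polynomial estimate. Outside this window the standard derivative tests apply, uniformly in the truncation parameter $M$, producing the needed cancellation and closing the argument.
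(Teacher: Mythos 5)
Your skeleton is right: discretize the two lemmas, use Gallagher's inequality to pass from the continuous to the discrete mean-value estimate, and reduce via Weyl's criterion to a uniform distribution statement for
$\bigl(\gamma_j(k)\tfrac{\log p}{2\pi}\bigr)_{j,p}$. But there is a genuine gap in the integer-$a_j$, $b_j=0$ case, and it is precisely the gap the paper's construction of $\mathcal{P}_j$, $\mathcal{A}_j$, $m_j^*$, $q^*$ exists to fill.

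When $a_j\in\mathbb{N}$ and $b_j=0$, the exponent in your Weyl sum is the pure polynomial $c\,k^{a_j}$ with $c=\alpha_j\sum_{p\in M_j}h_{jp}\tfrac{\log p}{2\pi}$. Derivative (van der Corput) tests are of no use here: the $a_j$-th difference is constant and all higher ones vanish, so the sum $\sum_{k\le N}e(c\,k^{a_j})$ is $o(N)$ if and only if $c$ is \emph{irrational}, by Weyl's polynomial equidistribution theorem. That irrationality is not guaranteed. For instance, if $\alpha_j=2\pi/\log 2$, choosing $M_j\ni 2$ and $h_{j2}=1$, all other $h_{jp}=0$, gives $c=1\in\mathbb{Q}$; the sequence $(k^{a_j})_k$ is integer-valued and not equidistributed at all. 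Your sentence ``Outside this window the standard derivative tests apply'' is therefore false at $b_j=0$: no derivative test can rescue a polynomial sequence with rational coefficient, and $b_j=0$ is allowed by the theorem.

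What the paper actually does in this case is arithmetic, not analytic. It defines $m_j^*$ as the least positive integer with $\exp(2\pi m_j^*/\alpha_j)\in\mathbb{Q}$, factors this rational as $\prod_{p\in\mathcal{A}_j}p^{k_{jp}}$, removes one prime $p_j^*\in\mathcal{A}_j$ from the admissible set (so that $\alpha_j\sum_p h_{jp}\tfrac{\log p}{2\pi}$ is forced to be irrational for all non-zero $(h_{jp})$ supported off $p_j^*$), rescales $k\mapsto q^*k$ to restore integrality, and compensates by modifying the target functions to $f_j^*=\prod_{p\in\mathcal{A}_j}(1-\chi_j(p)p^{-s})f_j$. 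The discrete version of Lemma~\ref{lem:finiteProd} (Lemma~\ref{lem:finiteProdDiscrete}) is stated with the extra sets $\mathcal{P}_j,\mathcal{A}_j$ exactly to accommodate this. Your proposal treats the discrete lemma as a verbatim discretization of Lemma~\ref{lem:finiteProd} and never confronts the possibility that the equidistribution can simply fail for the unmodified curve, so the argument does not close in the $a_j\in\mathbb{N}$, $b_j=0$ case.

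A smaller remark: for $a_j\notin\mathbb{Z}$, or $a_j\in\mathbb{N}$ with $b_j<0$ or $b_j>1$, your van der Corput reasoning is essentially what the paper does (via \cite[Theorem 3.5]{KN} applied to the dominant summand singled out by the lexicographically largest $(a_j,b_j)$), and the Gallagher step matches Proposition~\ref{prop:meanValueGeneralDiscrete}. Those parts are fine.
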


It should be noted that Theorem \ref{thm:univDiscrete} (as well as Theorem \ref{thm:univNonInt}) might be formulated in a slightly more general form where instead of the assumption on $a_j,b_j$ we assume that the sequence
\begin{equation}\label{eq:unifDist}
\left(\gamma_j(k)\frac{\log p}{2\pi}:j=1,2,\ldots,n,\ p\in A\right)
\end{equation}
is uniformly distributed (resp. continuous uniformly distributed) modulo $1$ for every finite set $A\subset\mathbb{P}$. 

Let us recall that the sequence $(\omega_1(k),\ldots,\omega_n(k))_{k\in\mathbb{N}}$ is \emph{uniformly distributed mod $1$} in $\mathbb{R}^n$ if for every $\alpha_j,\beta_j$, $j=1,2,\ldots,n$, with $0\leq \alpha_j<\beta_j\leq 1$ we have
\[
\lim_{T\to\infty}\frac{1}{N}\sharp\left\{1\leq k\leq N: \{\omega_j(k)\}\in[\alpha_j,\beta_j]\right\} = \prod_{j=1}^n (\beta_j-\alpha_j),
\]
where $\{x\} = x-[x]$. Similarly, we say that the curve $\omega(\tau): [0,\infty]\to\mathbb{R}^n$ is \emph{continuously uniformly distributed mod $1$} in $\mathbb{R}^n$ if for every $\alpha_j,\beta_j$, $j=1,2,\ldots,n$, with $0\leq \alpha_j<\beta_j\leq 1$ we have
\[
\lim_{T\to\infty}\frac{1}{T}\operatorname{meas}\left\{\tau\in (0,T]: \{\omega(\tau)\}\in[\alpha_1,\beta_1]\times\cdots\times[\alpha_n,\beta_n]\right\} = \prod_{j=1}^n (\beta_j-\alpha_j),
\]
where $\{(x_1,\ldots,x_n)\}:=(\{x_1\},\ldots,\{x_n\})$.

One can easily notice that Weyl's criterion (see \cite[Theorem 6.2 and Theorem 9.2]{KN}) shows that \eqref{eq:linIndwith1} and \eqref{eq:linInd} imply that \eqref{eq:unifDist} is (continuous) uniformly distributed mod $1$. Thus, our approach allows to generalize the result of Dubickas and Laurin\v{c}ikas, and the result due to Laurin\v{c}ikas, Macaitien\.{e} and \v{S}iau\v{c}i\={u}nas to more general functions than $\gamma_j(t)=\alpha_jt^a$ with $a\in (0,1]$.

\section{Approximation by finite product}

Essentially we shall follow the original proof of Voronin's result, which, roughly speaking, might be divided into two parts. The first one relies mainly on uniform distribution mod $1$ of the sequence of numbers $\gamma_j(t)\tfrac{\log p}{2\pi}$ (or independece of $p^{i\gamma_j(t)}$) and deals with approximation of any analytic function by shifts of a truncated Euler product. The second one deals with an application of the second moment of $L$-functions to approximate a truncated Euler product by a corresponding $L$-function in the mean-square sense. 

In this section, we shall focus on the first part. In order to do this, for a Dirichlet character $\chi$, a finite set of primes $M$ and real numbers $\theta_p$ indexed by primes, we put
\[
L_{M}(s,(\theta_p);\chi) = \prod_{p\in M}\left(1-\frac{\chi(p)e(-\theta_p)}{p^s}\right)^{-1},
\]
where, as usual, $e(t)=\exp(2\pi i t)$. Note that for $\sigma>1$ we have $L_\mathbb{P}(s,\overline{0};\chi)=L(s,\chi)$, where $\overline{0}$ denotes the constant sequence of zeros and $\mathbb{P}$ the set of all prime numbers.

\begin{lemma}\label{lem:finiteProd}
Assume that the functions $\gamma_j:\mathbb{R}\to\mathbb{R}$, $1\leq j\leq n$, are such that the curve
\[
\gamma(\tau)=\left(\left(\gamma_1(\tau)\frac{\log p}{2\pi}\right)_{p\in M_1},\ldots,\left(\gamma_n(\tau)\frac{\log p}{2\pi}\right)_{p\in M_n}\right)
\]
is continuously uniformly distributed mod $1$ in $\mathbb{R}^{\sum_{1\leq j\leq n}\sharp M_j}$ for any finite sets of primes $M_j$, $1\leq j\leq n$. Moreover, let $\chi_1,\ldots,\chi_n$ be arbitrary Dirichlet characters, $K\subset\{s\in\mathbb{C}:1/2<\sigma<1\}$ be a compact set with connected complement and $f_1,\ldots,f_n$ continuous non-vanishing function on $K$, which are analytic in the interior of $K$. Then, for every $\varepsilon>0$, there is $v>0$ such that for every $y>v$ we have
\[
\operatorname{meas}\left\{\tau\in [2,T]: \max_{1\leq j\leq n}\max_{s\in K}\left|L_{\{p:p\leq y\}}(s+i\gamma_j(\tau),\overline{0};\chi_j) - f_j(s)\right|<\varepsilon\right\}>cT
\]
with suitable constant $c>0$, which does not depend on $y$.
\end{lemma}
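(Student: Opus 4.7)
The plan is to follow Voronin's classical scheme in its joint form, with the continuous uniform distribution hypothesis playing the role usually played by Kronecker's or Weyl's equidistribution theorem. First, since each $f_j$ is non-vanishing and continuous on $K$ (with connected complement) and analytic on its interior, a Mergelyan approximation produces polynomials $g_j$ with $\max_{s\in K}|e^{g_j(s)}-f_j(s)|<\varepsilon/3$, so it suffices to approximate each $e^{g_j}$ by the logarithm of the truncated Euler product to within $\varepsilon/3$.

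Second, apply the Pechersky--Voronin rearrangement argument (carried out in a Bergman space on a slightly larger compact $K'\supset K$) separately for each $j$. This yields a single threshold $y_0=y_0(\varepsilon)$ and target phases $\hat\theta_p^{(j)}\in[0,1)$ for $p\leq y_0$, $1\leq j\leq n$, such that
\[
\max_{s\in K}\left|\log L_{\{p\leq y_0\}}(s,(\hat\theta_p^{(j)});\chi_j)-g_j(s)\right|<\varepsilon/3.
\]
Set $v:=y_0$, $M:=\{p\leq y_0\}$, and let $y>v$ be arbitrary. Partition the coordinates $\gamma_j(\tau)\log p/(2\pi)\pmod 1$ into an \emph{essential block} indexed by $\{1,\ldots,n\}\times M$ and a \emph{tail block} indexed by $\{1,\ldots,n\}\times\{p:y_0<p\leq y\}$.

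By the continuous uniform distribution hypothesis applied to the essential block, the density of $\tau\in[2,T]$ for which each essential coordinate lies in the $\delta$-neighborhood of $\hat\theta_p^{(j)}$ is at least $(2\delta)^{n\sharp M}$ asymptotically, a positive quantity depending only on $\varepsilon$ and not on $y$. For the tail block, a mean-square calculation over uniform phases gives
\[
\int_{[0,1]^{n(\pi(y)-\pi(y_0))}}\max_{s\in K}\left|\sum_{y_0<p\leq y}\log\left(1-\frac{\chi_j(p)e(-\theta_p^{(j)})}{p^s}\right)^{-1}\right|^2 d\theta\ll \sum_{p>y_0}p^{-2\sigma_0},
\]
where $\sigma_0:=\min_{s\in K}\Re(s)>1/2$. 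The right-hand side is uniform in $y$ and can be made arbitrarily small by enlarging $y_0$; combining it with continuous uniform distribution on the tail coordinates and Chebyshev's inequality shows that the density of $\tau$ whose tail exceeds $\varepsilon/3$ is at most $\tfrac12(2\delta)^{n\sharp M}$.

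On the intersection of these two sets, of density at least $c=c(\varepsilon)>0$ independent of $y$, the logarithm of $L_{\{p\leq y\}}(s+i\gamma_j(\tau),\overline{0};\chi_j)$ is within $2\varepsilon/3$ of $g_j(s)$ by continuity of each Euler factor in its phase, so exponentiating together with the initial Mergelyan step delivers the required approximation of $f_j$ to within $\varepsilon$. The main obstacle is precisely the $y$-independence of $c$: this forces the clean separation between a fixed finite essential block of primes, handled by a finite-dimensional application of continuous uniform distribution, and a growing tail, which must be controlled by an $L^2$ averaging argument whose bound is uniform in $y$ rather than by pointwise estimates.
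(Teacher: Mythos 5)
Your overall scheme coincides with the paper's: Mergelyan reduction, a denseness lemma for twisted truncated Euler products (you invoke Pechersky--Voronin, the paper uses the Kaczorowski--Kulas lemma quoted as Lemma~\ref{lem:denseness}), and an $L^2$ estimate for the tail primes handled through the continuous uniform distribution hypothesis. Both the decomposition into a fixed finite ``essential'' block and a tail, and the observation that the density of the good set must be independent of $y$, are exactly the right ideas. However, there is a genuine gap in the way you control the tail.

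You first fix $y_0$ via the rearrangement argument, set $M=\{p\le y_0\}$, and obtain that the essential-block event has density at least $(2\delta)^{n\sharp M}=(2\delta)^{n\pi(y_0)}$. You then estimate the \emph{unconditional} mean square of the tail by $\sum_{p>y_0}p^{-2\sigma_0}$ and apply Chebyshev to conclude that the density of $\tau$ with large tail is at most $\tfrac12(2\delta)^{n\sharp M}$. This step does not go through: $\sum_{p>y_0}p^{-2\sigma_0}\asymp y_0^{1-2\sigma_0}/\log y_0$ decays only polynomially in $y_0$, while $(2\delta)^{n\pi(y_0)}$ decays roughly like $\exp(-c\,y_0/\log y_0)$ (for any $\delta<1/2$). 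So for $y_0$ large the unconditional Chebyshev bound cannot be made smaller than $\tfrac12(2\delta)^{n\pi(y_0)}$, and the two parameters cannot be decoupled because $y_0$ is pinned down by the Pechersky--Voronin step. In short, ``arbitrarily small by enlarging $y_0$'' works against you: enlarging $y_0$ shrinks the essential-block density far faster than it shrinks the tail.

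The fix is what the paper actually does: do not use a union bound over all of $[2,T]$. Instead integrate the tail error only over the good set $A_T$ (the set where the essential coordinates are near the targets), and use the joint continuous uniform distribution of \emph{all} the coordinates to show that this restricted average converges to
\[
\idotsint\displaylimits_{\mathcal{D}}|L_{M_j}(s,\overline{\omega};\chi_j)|^2\,|L_{Q\setminus M_j}(s,\overline{\omega};\chi_j)-1|^2\,d\omega
= O\bigl(m(\mathcal{D})\bigr)\cdot\int |L_{Q\setminus M_j}(s,\overline{\omega};\chi_j)-1|^2\,d\omega .
\]
Here the key point is that the $m(\mathcal{D})$ factor (of size $\asymp(2\delta)^{n\sharp M}$) appears automatically because the integration is over $\mathcal{D}$, so the remaining factor only has to be made small compared with a fixed $\varepsilon$-dependent constant, not compared with $m(\mathcal{D})$. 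Chebyshev on this conditional average then gives a bad subset of $A_T$ of measure at most $\tfrac12 m(\mathcal{D})T$, which is what you need. A secondary, but minor, imprecision: your mean-square bound has $\max_{s\in K}$ inside the $d\theta$ integral, whereas one needs to integrate in $s$ over a slightly larger region and then pass to the sup-norm on $K$ via a Bergman-type pointwise estimate; you allude to a Bergman space earlier but drop the $s$-integration at the crucial display. Finally, the claim that Pechersky--Voronin delivers a clean interval $\{p\le y_0\}$ of primes rather than a general finite set $M_j\supset\{p\le z\}$ is slightly too strong; the paper avoids this by keeping $M_j$ arbitrary and using $Q\setminus M_j\subset\{p>z\}$.
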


Before we give a proof of the above result, let us recall the following crucial result on approximation any analytic function by a truncated Euler product twisted by a suitable sequence of complex numbers from the unit circle.

We call an open and bounded subset $G$ of $\mathbb{C}$ \textit{admissible}, when for every $\varepsilon > 0$ the set $G_\varepsilon = \{s\in\mathbb{C} : |s-w| < \varepsilon \textrm{ for certain }w\in G \}$ has connected complement. 

\begin{lemma}\label{lem:denseness}
For every Dirichlet character $\chi$, an admissible domain $G$ such that $\overline{G}~\subset \{s\in\mathbb{C}\;:\;\frac{1}{2}<\Re(s)<1\}$, every analytic and non-vanishing function $f$ on the closure $\overline{G}$, every finite set of primes $\mathcal{P}$, there exist $\theta_p\in\mathbb{R}$ indexed by primes and a sequence of finite sets $M_1\subset M_2\subset...$ of primes such that $\bigcup_{k=1}^\infty M_k=\mathbb{P}\setminus \mathcal{P}$ and, as $k\to\infty$,
$$L_{M_k}(s,(\theta_p)_{p\in M_k};\chi)\longrightarrow f(s)\quad \textrm{uniformly in}\ \overline{G}.$$
\end{lemma}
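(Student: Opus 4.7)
The plan is to reproduce the classical Voronin-Bagchi denseness argument, modified to delete a prescribed finite set $\mathcal{P}$ of primes and to produce a nested exhausting sequence $M_1\subset M_2\subset\cdots$ of $\mathbb{P}\setminus\mathcal{P}$ rather than a single $\varepsilon$-approximant.

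First I would enlarge $G$ to an admissible domain $G'$ with $\overline{G}\subset G'$ and $\overline{G'}\subset\{s\in\mathbb{C}:1/2<\Re(s)<1\}$, and work in the Bergman-type Hilbert space $H$ of $L^2$-analytic functions on $G'$. Since $f$ is analytic and non-vanishing on $\overline{G}$, admissibility guarantees a well-defined branch of $\log f$ in a neighborhood of $\overline{G}$; Mergelyan's theorem then approximates it uniformly by a polynomial $P$, reducing the problem to $f=e^P$, which is analytic and non-vanishing on all of $\overline{G'}$.

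Expanding the truncated product,
\[
\log L_M(s,(\theta_p);\chi)=\sum_{p\in M}\frac{\chi(p)e(-\theta_p)}{p^s}+\sum_{p\in M}\sum_{k\geq 2}\frac{(\chi(p)e(-\theta_p))^k}{kp^{ks}},
\]
the second sum converges absolutely and uniformly on $\overline{G'}$ independently of $M$ and the phases, since $2\Re(s)>1$. Writing its limit over $\mathbb{P}\setminus\mathcal{P}$ as $R(s)$, the task reduces to choosing phases and a nested exhaustion $(M_k)$ of $\mathbb{P}\setminus\mathcal{P}$ so that the linear sums $\sum_{p\in M_k}\chi(p)e(-\theta_p)p^{-s}$ converge uniformly on $\overline{G}$ to $P(s)-R(s)$.

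This conditional-convergence problem with free phases is the natural setting for the Hilbert-space rearrangement (Pechersky) theorem applied to the vectors $v_p=\chi(p)p^{-s}\in H$, $p\in\mathbb{P}\setminus\mathcal{P}$. The required inputs are (i) $\sum_p\|v_p\|_H^2<\infty$, immediate from $\Re(s)>1/2$, and (ii) for every non-zero continuous functional $\varphi$ on $H$, $\sum_p|\varphi(v_p)|=\infty$. Point (ii) is the main obstacle: by Riesz representation $\varphi(h)=\langle h,g\rangle_H$ for some $g\in H$, and excluding absolute summability of $\langle v_p,g\rangle$ whenever $g\neq 0$ reduces to a Paley-Wiener-style analytic-continuation argument combined with the prime number theorem. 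The strip condition $1/2<\Re(s)<1$ is crucial throughout: $\Re(s)>1/2$ secures (i), while $\Re(s)<1$ prevents the $v_p$ from being absolutely summable in norm, which would trivialize the rearrangement theorem in the wrong direction. Granted (i) and (ii), Pechersky yields phases $\theta_p$ and an ordering of $\mathbb{P}\setminus\mathcal{P}$ whose partial sums converge in $H$ to $P(s)-R(s)$. Taking $M_k$ as initial segments of this ordering — and inserting any neglected prime with a phase chosen so that $p^{-\Re(s)}$ makes the added summand negligible — ensures $\bigcup_k M_k=\mathbb{P}\setminus\mathcal{P}$ without disrupting convergence, and $H$-convergence on $G'$ upgrades to uniform convergence on $\overline{G}$ via Cauchy's integral formula. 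Exponentiating $\log L_{M_k}\to P$ then delivers the lemma.
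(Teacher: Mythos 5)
The paper's own proof of this lemma is a one-line citation to Lemma~7 of Kaczorowski--Kulas \cite{KK}; it gives no argument of its own. Your proposal reconstructs what is almost certainly the content of that cited proof: the classical Voronin--Bagchi denseness mechanism in a Bergman-type Hilbert space of square-integrable analytic functions, driven by Pechersky's rearrangement theorem. The structure you lay out --- pass to $\log f$ via admissibility, reduce to a polynomial target by Mergelyan, split $\log L_M$ into the linear piece $\sum_p \chi(p)e(-\theta_p)p^{-s}$ plus an absolutely convergent tail on $\Re(s)>1/2$, and then hit the linear piece with a Hilbert-space rearrangement theorem whose two hypotheses are $\sum_p\|v_p\|^2<\infty$ and $\sum_p|\varphi(v_p)|=\infty$ for all $\varphi\neq 0$, the latter via a Paley--Wiener-style argument with the prime number theorem --- is exactly the standard route, and your observation that deleting the finite set $\mathcal{P}$ is harmless for both hypotheses is correct. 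So you are not taking a different path; you are filling in the citation with the right proof.

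Two details in your sketch need repair before this is airtight. First, the tail $R(s)$ you define as the limit of the $k\ge 2$ terms depends on the very phases $\theta_p$ you have yet to choose, so ``make the linear sums converge to $P(s)-R(s)$'' is circular as stated. The standard fix is to split the primes at a large threshold: for $p$ beyond the threshold the tail contribution is uniformly negligible, and for the finitely many small primes the phases can be fixed first so that their tail contribution is an honest constant. Second, your remark about ``inserting any neglected prime with a phase chosen so that $p^{-\Re(s)}$ makes the added summand negligible'' is both unnecessary and, as written, impossible: the modulus $|e(-\theta_p)\chi(p)p^{-s}|=p^{-\sigma}$ does not depend on $\theta_p$, so no phase choice shrinks the term. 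Fortunately nothing needs to be inserted, because a Pechersky rearrangement is a bijection on the entire index set $\mathbb{P}\setminus\mathcal{P}$, so the initial segments $M_k$ of that ordering already exhaust it. Finally, the divergence hypothesis (ii), which you rightly flag as the main obstacle, is where nearly all of the analytic work in this lemma actually lives; a complete proof would have to carry out that argument rather than gesture at it.
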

\begin{proof}
This is Lemma 7 in \cite{KK}.
\end{proof}

\begin{proof}[Proof of Lemma \ref{lem:finiteProd}]
By Mergelyan's theorem we can assume, without loss of generality, that the $f_j$'s are polynomials. Then we can find an admissible set $G$ such that $K\subset G\subset\overline{G}\subset\{s\in\mathbb{C}:1/2<\sigma<1\}$  and each $f_j$ is analytic nonvanishing on $\overline{G}$. Therefore by Lemma \ref{lem:denseness} with $\mathcal{P}=\emptyset$, there exist real numbers $\theta_{jp}$ for $p\in\mathbb{P}$, $1\leq j\leq n$ such that, for any $z>0$ and $\varepsilon>0$, there are finite sets of primes $M_1,\ldots,M_n$ such that $\{p:p\leq z\}\subset M_j$ for every $j=1,2,\dots,r$ and
\begin{equation}\label{eq:fromDenseness0}
\max_{1\leq j\leq n}\max_{s\in\overline{G}}\left|L_{M_j}(s,(\theta_{jp})_{p\in M_j};\chi_j)-f_j(s)\right|<\frac{\varepsilon}{2}.
\end{equation}
Now, let 
\[
\mathcal{D}:=\{\overline{\omega}=(\omega_{jp})_{p\in Q}^{1\leq j\leq n}:\max_{1\leq j\leq n}\max_{p\in M_j}\Vert\omega_{jp}-\theta_{jp}\Vert<\delta\},
\]
where $Q=\{p:p\leq y\}\supset \bigcup_{1\leq j\leq n}M_j$ and $\delta>0$ is sufficiently small such that
\[
\max_{1\leq j\leq n}\max_{s\in\overline{G}}\left|L_{M_j}(s,(\omega_{jp});\chi_j)-L_{M_j}(s,(\theta_{jp});\chi_j)\right|<\frac{\varepsilon}{2},
\]
provided $(\omega_{jp})\in\mathcal{D}$.

Our assumption on $\gamma(\tau)$ implies that the set $A$ of real $\tau\geq 2$ satisfying
\[
\max_{1\leq j\leq n}\max_{p\in M_j}\left\Vert\gamma_j(\tau)\frac{\log p}{2\pi}-\theta_{jp}\right\Vert<\delta
\]
has a positive density equal to the Jordan measure $m(\mathcal{D})$ of $\mathcal{D}$. Moreover, we have
\begin{equation}\label{eq:fromDenseness}
\max_{1\leq j\leq n}\max_{s\in\overline{G}}\left|L_{M_j}(s+i\gamma_j(\tau),\overline{0};\chi_j)-f_j(s)\right|<\varepsilon,\qquad\text{for $\tau\in A$}.
\end{equation}

Now, let us define $A_T=A\cap [2,T]$ and
\[
I_j = \frac{1}{T}\int_{A_T}\left(\iint_{G}\left|L_Q(s+i\gamma_j(\tau),\overline{0};\chi_j)-L_{M_j}(s+i\gamma_j(\tau),\overline{0};\chi_j)\right|^2d\sigma dt\right)d\tau.
\]
Since
\[
I_j = \frac{1}{T}\int_{A_T}\left(\iint_{G}\left|L_Q(s,(\gamma_j(\tau)\frac{\log p}{2\pi});\chi_j)-L_{M_j}(s,(\gamma_j(\tau)\frac{\log p}{2\pi});\chi_j)\right|^2d\sigma dt\right)d\tau
\]
and $\gamma(\tau)$ is continuously uniformly distributed mod $1$, we obtain (see Lemma A.8.3 in \cite{KV})
\begin{align*}
&\lim_{T\to\infty} \frac{1}{T}\int_{A_T}\left|L_Q(s,(\gamma_j(\tau)\frac{\log p}{2\pi});\chi_j)-L_{M_j}(s,(\gamma_j(\tau)\frac{\log p}{2\pi});\chi_j)\right|^2 d\tau\\
&\qquad\qquad = \idotsint\displaylimits_{\mathcal{D}}|L_{M_j}(s,\overline{\omega};\chi_j)|^2|L_{Q\setminus M_j}(s,\overline{\omega};\chi_j)-1|^2d\omega\\
&\qquad\qquad = \left(\max_{s\in\overline{G}}|f(s)|^2+\varepsilon\right)m(\mathcal{D})\int_0^1\cdots\int_0^1|L_{Q\setminus M_j}(s,\overline{\omega};\chi_j)-1|^2\prod_{p\in Q\setminus M_j}d\omega_{jp}.
\end{align*}
Therefore, since $Q\setminus M_j$ contains only primes greater that $z$, we have 
\[
I_j<\frac{\sqrt{\pi}\operatorname{dist}(\partial G,K)m(\mathcal{D})\varepsilon^2}{12r}\qquad\text{for sufficiently large $z$},
\]
where $\partial G$ denotes the boundary of $G$ and $\operatorname{dist}(A,B) = \inf\{|a-b|:a\in A,\ b\in B\}$.

Then, recalling that $\frac{1}{T}\int_{A_T}d\tau$ tends to $m(\mathcal{D})$ as $T\to\infty$, gives that the measure of the set of $\tau\in A_T$ satisfying 
\begin{multline*}
\sum_{j=1}^n\left(\iint_{G}\left|L_Q(s+i\gamma_j(\tau),\overline{0};\chi_j)-L_{M_j}(s+i\gamma_j(\tau),\overline{0};\chi_j)\right|^2d\sigma dt\right)\\<\frac{\sqrt{\pi}\operatorname{dist}(\partial G,K)\varepsilon^2}{4}
\end{multline*}
is greater than $\frac{m(\mathcal{D})T}{2}$. Then, using the fact that $|f(s)|\leq\tfrac{||f||}{\sqrt{\pi}\operatorname{dist}(\{s\},\partial{G})}$ for any analytic function $f$ and $s$ lying in the interior of $G$ (see \cite[Chapter III, Lemma 1.1]{G}), we get that the measure of the set of $\tau\in A_T$ satisfying
\[
\max_{1\leq j\leq n}\max_{s\in K}\left|L_Q(s+i\gamma_j(\tau),\overline{0};\chi_j)-L_{M_j}(s+i\gamma_j(\tau),\overline{0};\chi_j)\right|<\frac{\varepsilon}{2}
\]
is greater than $\frac{m(\mathcal{D})T}{2}$, which together with \eqref{eq:fromDenseness0} completes the proof with $v:=\max\{p:p\in \bigcup_{j}M_j\}$.
\end{proof}

\section{Application of  the second moment}

As we described in Section 2, in order to complete the proof of universality we need to show how to approximate shifts of a truncated Euler product by shifts of a corresponding $L$-function. In general a given $L$-function is not well-approximated by a corresponding truncated Euler product in the critical strip with respect to the supremum norm. Nevertheless it is well known that the situation is much easier if we consider the $L^2$-norm, which we use to prove the following result.

\begin{lemma}\label{lem:2kmomentsApp}
Assume that $\chi$ is a Dirichlet character, $a>0$, $\alpha\ne 0$ and $b$ are real numbers, and  $\gamma(t)=\alpha t^a(\log t)^b$. Then, for every $\varepsilon>0$ and  sufficiently large integer $y$, we have
\[
\operatorname{meas}\left\{\tau\in [0,T]: \max_{s\in K}\left|L(s+i\gamma(\tau);\chi)-L_{\{p:p\leq y\}}(s+i\gamma(\tau),\overline{0};\chi)\right|<\varepsilon\right\}>(1-\varepsilon)T
\]
for any compact set $K\subset\{s\in\mathbb{C}:1/2<\sigma<1\}$.
\end{lemma}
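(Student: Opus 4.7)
My strategy is a Chebyshev plus second-moment argument, where the nonlinear shift $\gamma(\tau)=\alpha\tau^a\log^b\tau$ is handled by a dyadic change of variables that converts the integration along $\gamma$ into ordinary real integration. By Chebyshev's inequality it suffices to produce a function $\eta(y)$ with $\eta(y)\to 0$ as $y\to\infty$ and
\[
\int_0^T \max_{s\in K}\bigl|L(s+i\gamma(\tau);\chi)-L_{\{p\leq y\}}(s+i\gamma(\tau),\overline{0};\chi)\bigr|^2\, d\tau \ll \eta(y)\,T.
\]
Enlarging $K$ to a compact $K'\subset\{s:1/2<\Re s<1\}$ containing $K$ in its interior and using the standard Cauchy-type bound $\max_K|f|^2\ll\iint_{K'}|f|^2\,d\sigma\,dt$ for $f$ analytic on $K'$ (as already used in the proof of Lemma~\ref{lem:finiteProd}), this reduces by Fubini to bounding, uniformly in $s=\sigma+it\in K'$, the one-dimensional integral
\[
J(s):=\int_0^T\bigl|L(\sigma+i(t+\gamma(\tau));\chi)-L_{\{p\leq y\}}(\sigma+i(t+\gamma(\tau)),\overline{0};\chi)\bigr|^2\, d\tau.
\]

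Since $a>0$ and $\alpha\ne 0$, the derivative $\gamma'(\tau)=\alpha\tau^{a-1}\log^{b-1}\tau\,(a\log\tau+b)$ has constant sign for $\tau\geq\tau_0$, so $\gamma$ is strictly monotone there; the initial piece $[0,\tau_0]$ contributes $O_y(1)$ to $J(s)$ and is absorbed. Partition $[\tau_0,T]$ dyadically into $I_k=[2^k,2^{k+1}]$. On each $I_k$ both $|\gamma'(\tau)|$ and $|\gamma(I_k)|/|I_k|$ are of order $2^{k(a-1)}k^b$ up to constants depending only on $a,b,\alpha$, so the substitution $u=\gamma(\tau)$ yields
\[
\int_{I_k}\bigl|L-L_{\{p\leq y\}}\bigr|^2\, d\tau \;\ll\;\frac{1}{2^{k(a-1)}k^b}\int_{J_k}\bigl|L(\sigma+iv;\chi)-L_{\{p\leq y\}}(\sigma+iv,\overline{0};\chi)\bigr|^2\, dv,
\]
where $J_k=t+\gamma(I_k)$ has length $\asymp 2^{ka}k^b$.

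The final ingredient is the classical Carlson-type second-moment estimate
\[
\int_A^B\bigl|L(\sigma+iv;\chi)-L_{\{p\leq y\}}(\sigma+iv,\overline{0};\chi)\bigr|^2 dv = (B-A)\,c_y(\sigma)+O(B^{2(1-\sigma)}+1),
\]
in which $c_y(\sigma)\to 0$ as $y\to\infty$, uniformly for $\sigma$ in compact subsets of $(1/2,1)$ (compare with the mean-value results in \cite{S}). Since $2(1-\sigma)<1$ on $K'$, for $k$ larger than some $k_0(y)$ the error is negligible next to the main term, and the $I_k$-piece contributes $\ll 2^k c_y(\sigma)$. Summing over $k_0(y)\leq k\leq\log_2 T$ and bounding the finitely many short blocks trivially gives $J(s)\ll T c_y(\sigma)+O_y(1)$, so $c_y(\sigma)$ plays the role of $\eta(y)$.

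I expect the main technical point to be the uniform control—over each dyadic block and over the compact $\sigma$-range of $K'$—of the sizes of $\gamma'(\tau)$ and of the Carlson error term. Both are straightforward for the explicit family $\alpha\tau^a\log^b\tau$, and I would have to be slightly more careful if the statement were generalized to an arbitrary $\gamma$. Note that the integrality restriction on $a$ from Theorem~\ref{thm:univNonInt} plays no role in this lemma; it enters only through the uniform distribution hypothesis used in Lemma~\ref{lem:finiteProd}.
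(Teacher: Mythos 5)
Your proposal is correct and follows essentially the same route as the paper: reduce via Chebyshev to a second-moment bound, convert the sup over $K$ to an $L^2$ integral, change variables $u=\gamma(\tau)$ over dyadic blocks where $\gamma'$ is of controlled size $\asymp X^{a-1}\log^bX$, and close with a Carlson mean-value estimate; the paper works block-by-block on $[X,2X]$ and invokes Carlson in its limit form, whereas you invoke a quantitative version with error $O(B^{2(1-\sigma)}+1)$, but this is only a cosmetic difference.
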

\begin{proof}
One can easily observe that it suffices to prove that for sufficiently large $T$ and $y$ we have
\begin{equation}\label{eq:fromCarlson}
\int_1^T\left|L(s+i\gamma(\tau);\chi)-L_{\{p:p\leq y\}}(s+i\gamma(\tau),\overline{0};\chi)\right|^2d\tau <\varepsilon^3T.
\end{equation}
In order to do that we shall prove that for every sufficiently large $X$ we have
\begin{equation}\label{eq:fromCarlsonX}
\int_X^{2X}\left|L(s+i\gamma(\tau);\chi)-L_{\{p:p\leq y\}}(s+
i\gamma(\tau),\overline{0};\chi)\right|^2d\tau <\varepsilon^3X.
\end{equation}

First note that 
\begin{align}\label{eq:GammaToTau}
&\int_X^{2X}\left|L(s+i\gamma(\tau);\chi)-L_{\{p:p\leq y\}}(s+
i\gamma(\tau),\overline{0};\chi)\right|^2d\tau\\
&\qquad\qquad\qquad\qquad\ll X^{1-a}(\log X)^{-b}\nonumber\\
&\qquad\qquad\qquad\qquad\quad\times\int_{X}^{2X}\left|L(s+i\gamma(\tau);\chi)-L_{\{p:p\leq y\}}(s+
i\gamma(\tau),\overline{0};\chi)\right|^2 d\gamma(t).\nonumber
\end{align}
Next, one can easily show that for every $s\in K$ we have
\[
\int_1^{T}\left|L(s+i\tau;\chi)-L_{\{p:p\leq y\}}(s+i\tau,\overline{0};\chi)\right|^2d\tau\ll T
\]
for sufficiently large $T$, so, by Carlson's theorem (see for example Theorem A.2.10 in \cite{KV}), we obtain
\[
\lim_{T\to\infty}\frac{1}{T}\int_1^T\left|L(s+i\tau;\chi)-L_{\{p:p\leq y\}}(s+i\tau,\overline{0};\chi)\right|^2d\tau = \sum_{n\geq y}\frac{c_n}{n^{2\sigma}}
\]
with $c_n=0$ if all primes dividing $n$ are less than $y$, and $c_n=1$ otherwise. Hence, the second factor on the right hand side of \eqref{eq:GammaToTau} is 
\[
\ll \gamma(2X)\sum_{n\geq y}\frac{c_n}{n^{2\sigma}}<\varepsilon^3 X^a\log^b X
\]
for sufficiently large $X$ and $y$, which gives \eqref{eq:fromCarlsonX} and the proof is complete.
\end{proof}

Now we are in the position to prove Theorem \ref{thm:univNonInt}.

\begin{proof}[Proof of Theorem \ref{thm:univNonInt}]
In view of Lemma \ref{lem:finiteProd} and the last lemma it is sufficient to prove that for every finite sets $M_1,\ldots,M_n$ of primes the curve
\[
\gamma(\tau)=\left(\left(\gamma_1(t)\frac{\log p}{2\pi}\right)_{p\in M_1},\ldots,\left(\gamma_n(t)\frac{\log p}{2\pi}\right)_{p\in M_n}\right)
\]
is continuously uniformly distributed mod $1$, where $\gamma_j(t)=\alpha_jt^{a_j}\log^{b_j} t$. By Weyl's criterion we need to prove that
\[
\lim_{T\to\infty}\frac{1}{T}\int_0^T\exp\left(2\pi i\sum_{j=1}^n \gamma_j(t)\left(\sum_{p\in M_j}h_{jp}\frac{\log p}{2\pi}\right)\right)dt = 0
\]
for any non-zero sequence of integers $(h_{jp})$. 

Without loss of generality we can assume that for every $j$ there is at least one $p\in M_j$ such that $h_{jp}\ne 0$. Therefore, $c_j:= \sum_{p\in M_j}h_{jp}\frac{\log p}{2\pi}\ne 0$ for every $1\leq j\leq n$, and again, by Weyl's criterion, it suffices to show that $g(t) = \sum_{j=1}^n c_j\gamma_j(t)$ is continuously uniformly distributed mod $1$ in $\mathbb{R}$. In order to prove it, we shall use \cite[Theorem 9.6]{KN} and show that for almost all $t\in[0,1]$ the sequence $(g(nt))_{n\in\mathbb{N}}$ is uniformly distributed mod $1$ in $\mathbb{R}$ for any real $c_j\ne 0$.

Let $a=\max_{1\leq j\leq n} a_j$, $b=\max\{b_j: 1\leq j\leq n,\ a_j=a\}$ and $j_0$ be an index satisfying $(a_{j_0},b_{j_0}) = (a,b)$. First, let us assume that $a_{j_0}\not\in\mathbb{Z}$, $b_{j_0}\in\mathbb{R}$ or $a_{j_0}\in\mathbb{Z}$, $b_{j_0}<0$. Then it is clear that for every $t\in (0,1)$ the function $g_t(x) = \sum_{j=1}^n c_j\gamma_j(x)$ is $\lceil a\rceil$ times differentiable and $g_t^{(\lceil a\rceil)}(x)\asymp x^{a-\lceil a\rceil}\log^b x$. Hence $g_t^{(\lceil a\rceil)}(x)$ tends monotonically to $0$ as $x\to \infty$ and $x\left|g_t^{(\lceil a\rceil)}(x)\right|\to\infty$ as $x\to\infty$, so, by \cite[Theorem 3.5]{KN}, the sequence $(g_t(n))=(g(nt))$, $n=1,2,\ldots$, is uniformly distributed mod $1$.

The case $a_{j_0}\in\mathbb{N}$ and $b_{j_0}>1$ is very similar, since $g_t^{(\lceil a\rceil+1)}(x) \asymp \frac{\log^{b-1} x}{x}$.

Finally, if $a_{j_0}\in\mathbb{N}$ and $b_{j_0}=0$, we see that $\lim_{x\to\infty} g_t^{(a)}(x)\to t^a a! c_{j_0}\alpha_{j_0}$, which is irrational for almost all $t\in [0,1]$. Therefore, \cite[Chapter 1, Section 3]{KN} (in particular see \cite[Exercise 3.7, p. 31]{KN}) shows that the sequence $(g_t(n))=(g(nt))$, $n=1,2,\ldots$, is uniformly distributed mod $1$ for almost all $t\in [0,1]$, and the proof is complete.
\end{proof}

\section{Discrete version}

In this section we deal with a discrete version of Theorem \ref{thm:univNonInt}. Let us start with the following discrete analogue of Lemma \ref{lem:finiteProd}.

\begin{lemma}\label{lem:finiteProdDiscrete}
Assume that the functions $\gamma_j:\mathbb{R}\to\mathbb{R}$, $1\leq j\leq n$, and $\mathcal{P}_j\subset\mathbb{P}$ are minimal sets such that the curve
\[
\gamma(k)=\left(\left(\gamma_1(k)\frac{\log p}{2\pi}\right)_{p\in M_1},\ldots,\left(\gamma_n(k)\frac{\log p}{2\pi}\right)_{p\in M_n}\right)
\]
is uniformly distributed mod $1$ for any finite sets of primes $M_j\subset\mathbb{P}\setminus\mathcal{P}_j$, $1\leq j\leq n$. Moreover, let $\chi_1,\ldots,\chi_n$ be arbitrary Dirichlet characters, $K\subset\{s\in\mathbb{C}:1/2<\sigma<1\}$ be a compact set with connected complement and $f_1,\ldots,f_n$ continuous non-vanishing function on $K$, which are analytic in the interior of $K$. Then, for every $\varepsilon>0$ and every finite sets $\mathcal{A}_j$ with $ \mathcal{P}_j\subset\mathcal{A}_j\subset\mathbb{P}$, there is $v>0$ such that for every $y>v$ we have
\[
\sharp\left\{2\leq k\leq N: \begin{matrix}
\max_{1\leq j\leq n}\max_{s\in K}\left|L_{\{\mathcal{A}_j\not\ni p:p\leq y\}}(s+i\gamma_j(k),\overline{0};\chi_j) - f_j(s)\right|<\varepsilon\\
\max_{1\leq j\leq n}\max_{p\in \mathcal{A}_j\setminus\mathcal{P}_j}\left\Vert\gamma_j(k)\frac{\log p}{2\pi}\right\Vert<\varepsilon
\end{matrix}\right\}>cN
\]
with suitable constant $c>0$, which does not depend on $y$.
\end{lemma}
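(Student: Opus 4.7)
The plan is to mirror the proof of Lemma \ref{lem:finiteProd}, replacing Lebesgue measure on $[2,T]$ by counting measure on $\{2,\ldots,N\}$, and to build the auxiliary small-norm constraint for $p\in\mathcal{A}_j\setminus\mathcal{P}_j$ directly into the box on the torus against which uniform distribution is applied. The minimality of $\mathcal{P}_j$ enters precisely to guarantee that every prime coordinate used in the argument lies in $\mathbb{P}\setminus\mathcal{P}_j$, so that the discrete uniform distribution hypothesis applies to it.

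First I would apply Mergelyan's theorem to reduce to polynomial $f_j$ and choose an admissible $G$ with $K\subset G\subset\overline{G}\subset\{s\in\mathbb{C}:1/2<\Re(s)<1\}$ on which each $f_j$ is analytic and non-vanishing. Next, I would invoke Lemma \ref{lem:denseness} with exceptional set $\mathcal{P}:=\mathcal{A}_j$ to obtain real numbers $\theta_{jp}$ for $p\in\mathbb{P}\setminus\mathcal{A}_j$ and finite sets $M_j\subset\mathbb{P}\setminus\mathcal{A}_j$ containing every prime $p\leq z$ outside $\mathcal{A}_j$, with
\[
\max_{1\leq j\leq n}\max_{s\in\overline{G}}\big|L_{M_j}(s,(\theta_{jp})_{p\in M_j};\chi_j)-f_j(s)\big|<\varepsilon/2.
\]
Choosing $\delta>0$ small enough that this estimate is preserved up to $\varepsilon$ under $\delta$-perturbations of the $\theta_{jp}$, I would then define
\[
\mathcal{D}=\Big\{\overline{\omega}:\|\omega_{jp}-\theta_{jp}\|<\delta\text{ for }p\in M_j,\ \|\omega_{jp}\|<\varepsilon/2\text{ for }p\in\mathcal{A}_j\setminus\mathcal{P}_j\Big\}.
\]
Since every coordinate is indexed by a prime in $\mathbb{P}\setminus\mathcal{P}_j$, the uniform distribution hypothesis gives that $A_N:=\{2\leq k\leq N:\gamma(k)\in\mathcal{D}\}$ has density $m(\mathcal{D})>0$. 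By construction, each $k\in A_N$ satisfies both the auxiliary bound $\max_j\max_{p\in\mathcal{A}_j\setminus\mathcal{P}_j}\|\gamma_j(k)\log p/(2\pi)\|<\varepsilon$ and $\max_{j,s\in\overline{G}}|L_{M_j}(s+i\gamma_j(k),\overline{0};\chi_j)-f_j(s)|<\varepsilon$.

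It remains to upgrade $L_{M_j}$ to $L_{Q_j}$ with $Q_j:=\{p\leq y:p\notin\mathcal{A}_j\}$ in the $L^2(G)$-sense on $A_N$. Factoring the difference as $L_{M_j}(L_{Q_j\setminus M_j}-1)$, all primes in $Q_j\setminus M_j$ exceed $z$; since moreover $Q_j\cup(\mathcal{A}_j\setminus\mathcal{P}_j)\subset\mathbb{P}\setminus\mathcal{P}_j$, the discrete analogue of Lemma A.8.3 of \cite{KV} yields
\[
\lim_{N\to\infty}\frac{1}{N}\sum_{k\in A_N}\iint_G\big|L_{M_j}(L_{Q_j\setminus M_j}-1)\big|^2 d\sigma\,dt=\int_{\mathcal{D}}\iint_G|L_{M_j}|^2|L_{Q_j\setminus M_j}-1|^2,
\]
and integrating out the $\omega_{jp}$ for $p\in Q_j\setminus M_j$ produces $\sum_{n\geq z}c_n/n^{2\sigma}=o(1)$ as $z\to\infty$. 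A Chebyshev argument retains at least $m(\mathcal{D})N/2$ points of $A_N$ satisfying the $L^2$-bound, and Cauchy's integral estimate (Chapter III, Lemma 1.1 of \cite{G}) converts it into a supremum bound on $K$, finishing the proof with $c=m(\mathcal{D})/2$ and $v:=\max\bigcup_j M_j$.

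The main (and essentially only substantive) obstacle is justifying the discrete analogue of Lemma A.8.3 of \cite{KV} in the present setting: in the continuous case uniform distribution is given globally, whereas here it is only guaranteed for prime coordinates avoiding $\mathcal{P}_j$, which forces the truncated product in the conclusion to be over $p\leq y$ with $p\notin\mathcal{A}_j$ and restricts the small-norm side condition to $\mathcal{A}_j\setminus\mathcal{P}_j$. Once this bookkeeping is correctly arranged, the remainder is a direct transcription of the proof of Lemma \ref{lem:finiteProd} from Lebesgue to counting measure.
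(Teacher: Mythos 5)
Your proposal is correct and follows essentially the same route as the paper's proof: both mirror the proof of Lemma \ref{lem:finiteProd} with counting measure in place of Lebesgue measure, both invoke Lemma \ref{lem:denseness} with exceptional set $\mathcal{A}_j$, both incorporate the auxiliary small-norm constraint on $p\in\mathcal{A}_j\setminus\mathcal{P}_j$ into the torus box $\mathcal{D}$ (the paper does this by setting $\theta_{jp}=0$ there), and both finish via the $L^2$/Chebyshev argument and the Cauchy-type estimate from \cite{G}. The only cosmetic difference is that the discrete uniform-distribution step you call ``the discrete analogue of Lemma A.8.3 of \cite{KV}'' is cited in the paper as Theorem 6.1 of \cite{KN}.
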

\begin{proof}
The proof closely follows the proof of Lemma \ref{lem:finiteProd}, therefore we will be rather sketchy. 

As in the proof of Lemma \ref{lem:finiteProd} we use Mergelyan's theorem and Lemma \ref{lem:denseness} to find the set $G$, real numbers $\theta_{jp}$ for $p\in\mathbb{P}\setminus \mathcal{A}_j$, $1\leq j\leq n$, finite sets of primes $M_j$, $1\leq j\leq n$,  containing $\{p\in\mathbb{P}\setminus\mathcal{A}_j:p\leq z\}$ and satisfying 
\[
\max_{1\leq j\leq n}\max_{s\in\overline{G}}\left|L_{M_j}(s,(\theta_{jp})_{p\in M_j};\chi_j)-f_j(s)\right|<\frac{\varepsilon}{2}.
\]

Moreover, we put $\theta_{jp}=0$ for $p\in\mathcal{A}_j\setminus\mathcal{P}_j$ and $Q_j:=\{p\in\mathbb{P}\setminus \mathcal{P}_j: p<y\}\supset \bigcup_{1\leq j\leq n}M_j$ and then define the set $\mathcal{D}$ and $\delta>0$ as in the proof of Lemma \ref{lem:finiteProd}.

Let us notice that, in view of the choice of the sets $\mathcal{P}_j$, $\mathcal{A}_j$ and $M_j$, the set $A$ of positive integers $k$ satisfying
\[
\max_{1\leq j\leq n}\max_{p\in M_j}\left\Vert\gamma_j(k)\frac{\log p}{2\pi}-\theta_{jp}\right\Vert<\delta\qquad \max_{1\leq j\leq n}\max_{p\in \mathcal{A}_j\setminus\mathcal{P}_j}\left\Vert\gamma_j(k)\frac{\log p}{2\pi}\right\Vert<\varepsilon
\]
has a positive density equal to $m(\mathcal{D})$ and
\begin{equation}
\max_{1\leq j\leq n}\max_{s\in\overline{G}}\left|L_{M_j}(s+i\gamma_j(k),\overline{0};\chi_j)-f_j(s)\right|<\varepsilon,\qquad\text{for $k\in A$}.
\end{equation}

Now, let us define $A_N=A\cap [2,N]$ and consider
\[
S_j = \frac{1}{N}\sum_{k\in A_N}\iint_{G}\left|L_{Q_j\setminus\mathcal{A}_j}(s+i\gamma_j(k),\overline{0};\chi_j)-L_{M_j}(s+i\gamma_j(k),\overline{0};\chi_j)\right|^2d\sigma dt.
\]
Since $\gamma(k)$ is  uniformly distributed mod $1$ and $Q_j\setminus (M_j\cup \mathcal{A}_j)$ contains only primes greater than $z$, we obtain from \cite[Theorem 6.1]{KN}) that
\begin{align*}
&\lim_{N\to\infty} \frac{1}{N}\sum_{k\in A_N}\left|L_{Q_j\setminus\mathcal{A}_j}(s,(\gamma_j(k)\frac{\log p}{2\pi});\chi_j)-L_{M_j}(s,(\gamma_j(k)\frac{\log p}{2\pi});\chi_j)\right|^2\\
&\qquad\qquad = \idotsint\displaylimits_{\mathcal{D}}|L_{M_j}(s,\overline{\omega};\chi_j)|^2|L_{Q_j\setminus (M_j\cup\mathcal{A}_j)}(s,\overline{\omega};\chi_j)-1|^2d\omega\\
&\qquad\qquad <\frac{\sqrt{\pi}\operatorname{dist}(\partial G,K)m(\mathcal{D})\varepsilon^2}{12r}
\end{align*}

Then, again uniform distribution mod $1$ of $\gamma(k)$ gives that $\frac{1}{N}\sharp A_k$ tends to $m(\mathcal{D})$ as $N\to\infty$. Hence the number of $k\in A_N$ satisfying 
\begin{multline*}
\sum_{j=1}^n\left(\iint_{G}\left|L_{Q_j\setminus\mathcal{A}_j}(s+i\gamma_j(k),\overline{0};\chi_j)-L_{M_j}(s+i\gamma_j(k),\overline{0};\chi_j)\right|^2d\sigma dt\right)\\<\frac{\sqrt{\pi}\operatorname{dist}(\partial G,K)\varepsilon^2}{4}
\end{multline*}
is greater that $\frac{m(\mathcal{D})N}{2}$. Then, the proof is complete as in the proof of Lemma~\ref{lem:finiteProd}.
\end{proof}

The next proposition is a discrete version of Lemma \ref{lem:2kmomentsApp} and its proof relies on Carlson's theorem and the following Gallagher's lemma
\begin{lemma}[Gallagher] Let $T_0$ and $T\geq \delta>0$ be real numbers, and $A$ be a finite subset of $[T_0+\delta/2,T+T_0-\delta/2]$. Define $N_\delta(x) = \sum_{t\in A,\ |t-x|<\delta} 1$ and assume that $f(x)$ is a complex continuous function on $[T_0,T+T_0]$ continuously differentiable on $(T_0,T+T_0)$. Then
\begin{align*}
\sum_{t\in A}N_\delta^{-1}(t)|f(t)|^2&\leq \frac{1}{\delta}\int_{T_0}^{T+T_0}|f(x)|^2 dx\\
&\quad +\left(\int_{T_0}^{T+T_0}|f(x)|^2 dx\int_{T_0}^{T+T_0}|f'(x)|^2 dx\right)^{1/2}.
\end{align*}
\end{lemma}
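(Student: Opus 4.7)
My plan is to establish a pointwise sampling inequality for $|f(t)|^2$ in terms of the $L^2$-mass of $f$ and $f'$ on a window of length $\delta$ around $t$, and then to sum this inequality over $t\in A$ with weights $N_\delta^{-1}(t)$, exploiting a simple combinatorial bound on how many windows simultaneously cover a given point.

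The starting point is the identity
\[
|f(t)|^2 = |f(x)|^2 + 2\Re\int_x^t f'(u)\overline{f(u)}\,du,
\]
valid for $t,x\in[T_0,T+T_0]$ by the fundamental theorem of calculus. Because $A\subset[T_0+\delta/2,T+T_0-\delta/2]$, the interval $I_t:=(t-\delta/2,t+\delta/2)$ lies entirely in $[T_0,T+T_0]$ for every $t\in A$. Averaging the identity over $x\in I_t$ and applying Fubini to the resulting double integral (each $u\in I_t$ then carries a weight of at most $\delta/2$) produces the sampling bound
\[
|f(t)|^2 \leq \frac{1}{\delta}\int_{I_t}|f(x)|^2\,dx + \int_{I_t}|f'(u)||f(u)|\,du.
\]

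The second step is to multiply by $N_\delta^{-1}(t)$, sum over $t\in A$, and swap each sum with the integral. Both resulting expressions take the form $\int_{T_0}^{T+T_0} g(u)\,\Phi(u)\,du$, where
\[
\Phi(u) \;=\; \sum_{\substack{t\in A\\ |t-u|<\delta/2}} N_\delta^{-1}(t).
\]
The heart of the argument, and the only really non-routine point, is the combinatorial inequality $\Phi(u)\leq 1$ for all $u$. Writing $S(u)$ for the set of $t\in A$ with $|t-u|<\delta/2$, any two elements of $S(u)$ are at distance less than $\delta$ from each other, so $|S(u)|\leq N_\delta(t)$ for every $t\in S(u)$, and hence $\Phi(u)\leq |S(u)|\cdot|S(u)|^{-1}=1$.

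Applying this bound replaces the first summand by $\delta^{-1}\int_{T_0}^{T+T_0}|f(x)|^2\,dx$ and the second by $\int_{T_0}^{T+T_0}|f'(u)||f(u)|\,du$; a final application of Cauchy--Schwarz to the latter yields the geometric mean of the $L^2$-norms of $f$ and $f'$, which is precisely the bound in the statement. I expect the covering inequality $\Phi\leq 1$ to be the only conceptual hurdle; everything else amounts to the fundamental theorem of calculus, Fubini, and Cauchy--Schwarz.
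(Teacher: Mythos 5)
Your proof is correct. The paper itself does not prove this lemma but simply cites Lemma 1.4 of Montgomery's \emph{Topics in Multiplicative Number Theory}, and your argument — FTC to get $|f(t)|^2=|f(x)|^2+2\Re\int_x^t f'\bar f$, averaging over the window $I_t$, the covering bound $\Phi(u)=\sum_{|t-u|<\delta/2}N_\delta^{-1}(t)\le 1$, and a final Cauchy--Schwarz — is precisely the standard proof from that source, so there is nothing to compare beyond noting they agree.
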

\begin{proof}
This is Lemma 1.4 in \cite{M}.
\end{proof}

\begin{proposition}\label{prop:meanValueGeneralDiscrete}
Assume that $\chi$ is a Dirichlet character, $a>0$, $\alpha\ne 0$ and $b$ are real numbers, and  $\gamma(t)=\alpha t^a(\log t)^b$. Then, for every $\varepsilon>0$ and  sufficiently large integer $y$, we have
\[
\sharp\left\{2\leq k\leq N: \max_{s\in K}\left|L(s+i\gamma(k);\chi)-L_{\{p:p\leq y\}}(s+i\gamma(k),\overline{0};\chi)\right|<\varepsilon\right\}>(1-\varepsilon)N
\]
for any compact set $K\subset\{s\in\mathbb{C}:1/2<\sigma<1\}$.
\end{proposition}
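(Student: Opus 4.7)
My plan is to transfer the continuous mean-square estimate of Lemma~\ref{lem:2kmomentsApp} to the discrete setting by means of Gallagher's lemma. First, by Chebyshev's inequality, combined with the standard bound $\max_{s\in K}|g(s)|^2\ll \iint_{G}|g(w)|^2\,d\sigma\,dt$ for a slight enlargement $G\supset K$ inside $\{1/2<\Re(s)<1\}$ (as used in the proof of Lemma~\ref{lem:finiteProd}), the claim reduces to showing that, for each fixed $s\in G$,
\[
\sum_{2\leq k\leq N}|F(\gamma(k))|^2 \ll \varepsilon^3 N, \qquad F(u):=L(s+iu;\chi)-L_{\{p\leq y\}}(s+iu,\overline{0};\chi),
\]
for $y=y(\varepsilon,K)$ sufficiently large. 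Splitting $[2,N]$ dyadically into blocks $[X,2X]$, it then suffices to bound $\sum_{X\leq k\leq 2X}|F(\gamma(k))|^2$ by $\ll\varepsilon^3 X$ on each block.

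On such a block I would apply Gallagher's lemma with $A=\{\gamma(k):X\leq k\leq 2X\}$, $f=F$, and $\delta$ slightly less than the minimal gap $\gamma(k+1)-\gamma(k)\asymp X^{a-1}\log^b X$, so that $N_\delta(\gamma(k))\equiv 1$. Writing $I$ for the ambient interval, whose length is $T\asymp\gamma(2X)\asymp X^a\log^b X$, Gallagher's inequality gives
\[
\sum_{X\leq k\leq 2X}|F(\gamma(k))|^2 \leq \frac{1}{\delta}\int_I|F|^2\,du + \Bigl(\int_I|F|^2\,du\cdot\int_I|F'|^2\,du\Bigr)^{1/2}.
\]
Applying Carlson's theorem both to $L-L_y$ and to its derivative $L'-L'_y$ (whose Dirichlet series have coefficients $c_n\chi(n)$ and $-c_n\chi(n)\log n$ respectively, where $c_n=1$ iff $n$ has a prime factor exceeding $y$, and zero otherwise) yields $\int_I|F|^2,\ \int_I|F'|^2\ll\eta(y)T$ with $\eta(y)\to 0$ as $y\to\infty$. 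Hence the main Gallagher term contributes $\eta(y)T/\delta\asymp\eta(y)X$, which sums dyadically to $\ll\eta(y)N$; choosing $y$ sufficiently large pushes this below $\varepsilon^3 N$.

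The delicate step is the Gallagher error term, which under the above mean-square bounds is of size $\eta(y)T\asymp\eta(y)X^a\log^b X$; this is absorbed into $\eta(y)X$ directly only when $a<1$ (or $a=1$, $b\le 0$). For the remaining $(a,b)$ allowed by the hypothesis of Theorem~\ref{thm:univDiscrete}, I would refine by truncating $F$ to a Dirichlet polynomial $F_Y(u)=\sum_{n\leq Y}c_n\chi(n)n^{-s-iu}$, estimating the tail $F-F_Y$ by the above Carlson mean-square, and expanding $|F_Y(\gamma(k))|^2$ into a diagonal part (which reproduces the desired $\eta(y)X$ contribution) plus off-diagonal Weyl sums $\sum_{X\leq k\leq 2X}(m/n)^{-i\gamma(k)}$ whose cancellation for $m\neq n$ comes from van der Corput-type estimates, i.e.\ from the very same uniform-distribution mechanism worked out in the proof of Theorem~\ref{thm:univNonInt}. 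Chebyshev's inequality then converts the resulting mean-square bound into the claimed lower bound on the count of good~$k$.
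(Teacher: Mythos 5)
Your Gallagher application differs from the paper's. The paper takes $A=\{2,\ldots,N\}$, $\delta=1/2$, $T_0=1$, $T=N$ and the composed function $f(t)=L(s+i\gamma(t);\chi)-L_{\{p\le y\}}(s+i\gamma(t),\overline{0};\chi)$, so the sample points are integers spaced by one; you place the sample points at $\gamma(k)$ with gap $\delta\asymp X^{a-1}\log^bX$ on dyadic blocks and apply Gallagher to $F(u)=L(s+iu;\chi)-L_{\{p\le y\}}(s+iu,\overline{0};\chi)$ directly. In the paper's normalization the main term is controlled by \eqref{eq:fromCarlson}, and the error term is handled by establishing the analogue of \eqref{eq:fromCarlson} for $L'-L'_y$ via Cauchy's integral formula.

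The genuine gap in your proposal is the case $a>1$ (and $a=1$ with $b>1$), which Theorem~\ref{thm:univDiscrete} allows. You correctly compute that the Gallagher square-root error is $\asymp\eta(y)X^a\log^bX$, which exceeds the target $\eta(y)X$ precisely there, but the fix you describe --- truncate to a Dirichlet polynomial of length $Y$, split into diagonal and off-diagonal parts, and bound the off-diagonal sums $\sum_k(m/n)^{-i\gamma(k)}$ by van der Corput --- is only a sketch: the choice of $Y$, the uniformity over $m\ne n\le Y$, and the dependence on $(a,b)$ are all left unaddressed, so this range remains unproved. You should also be aware that the paper's displayed error integral omits the chain-rule factor $\gamma'(t)$: since $f'(t)=i\gamma'(t)\bigl[L'(s+i\gamma(t);\chi)-L'_{\{p\le y\}}(s+i\gamma(t),\overline{0};\chi)\bigr]$ and $\gamma'(t)\asymp t^{a-1}\log^bt$, carrying $|\gamma'(t)|^2$ through gives $\int_1^{N+1}|f'|^2\,dt\ll\eta(y)N^{2a-1}\log^{2b}N$ and hence the same $\eta(y)N^a\log^bN$ Gallagher error that you run into. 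So your diagnosis of where the argument becomes delicate is sound; but the proposed remedy must actually be carried out, not merely sketched, for the proof to be complete.
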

\begin{proof}
Let us apply Gallagher's lemma for $f(x) = L(s+i\gamma(x);\chi)-L_{\{p:p\leq y\}}(s+i\gamma(x),\overline{0};\chi)$ with $\delta=1/2$, $T_0=1$, $T=N$ and $A=\{2,3,\ldots,N\}$. Then $N_\delta(t)=1$ for every $t\in A$, so
\begin{align*}
&\frac{1}{N}\sum_{k=2}^N|L(s+i\gamma(k);\chi)-L_{\{p:p\leq y\}}(s+i\gamma(k),\overline{0};\chi)|^2\\
&\qquad\qquad\ll \frac{1}{N}\int_{1}^{N+1}|L(s+i\gamma(t);\chi)-L_{\{p:p\leq y\}}(s+i\gamma(t),\overline{0};\chi)|^2 dt\\
&\qquad\qquad\quad+\left(\frac{1}{N}\int_{1}^{N+1}|L(s+i\gamma(t);\chi)-L_{\{p:p\leq y\}}(s+i\gamma(t),\overline{0};\chi)|^2 dt\right.\\
&\qquad\qquad\quad\times\left.\frac{1}{N}\int_{1}^{N+1}|L'(s+i\gamma(t);\chi)-L'_{\{p:p\leq y\}}(s+i\gamma(t),\overline{0};\chi)|^2 dt\right)^{1/2}.
\end{align*}
Then, as we observed in the proof of Lemma \ref{lem:2kmomentsApp}, Carlson's theorem gives \eqref{eq:fromCarlson}. Moreover, Cauchy's integration formula implies the truth of \eqref{eq:fromCarlson} for $L'$ as well. Therefore, we see that the right hand side of the above inequality is $<\varepsilon^3$ for sufficiently large $N$ and $y$, and the proof is complete.
\end{proof}

\begin{proof}[Proof of Theorem \ref{thm:univDiscrete}]
First we shall use Lemma \ref{lem:finiteProdDiscrete}, so let us define the sets $\mathcal{A}_j$ and $\mathcal{P}_j$ for $j=1,2,\ldots,n$. If $\gamma_j(t)=\alpha_jt^{a_j}\log^{b_j}t$ with $a_j\notin\mathbb{Z}$ or $b_j\ne 0$, then the proof is essentially the same as in the continuous case, so we just take $\mathcal{P}_j=\mathcal{A}_j=\emptyset$. 

The more delicate situation is when $a_j\in\mathbb{N}$ and $b_j=0$ for some $1\leq j\leq n$, since the sequence $(\alpha_j k^{a_j}\sum_{p\in M_j} h_{jp}\tfrac{\log p}{2\pi})_{k\in \mathbb{N}}$ is uniformly distributed mod $1$ only if $\alpha_j \sum_{p\in M_j} h_{jp}\tfrac{\log p}{2\pi}$ is irrational. In order to overcome this obstacle we define the sets $\mathcal{P}_j$ and $\mathcal{A}_j$ as follows. Let $m^*_{j}$ be the smallest positive integer such that $\exp(2\pi m^*_{j}/\alpha_j)\in \mathbb{Q}$. Note that for every $m\in\mathbb{Z}$ satisfying $\exp(2\pi m/\alpha_j)\in \mathbb{Q}$ we have $m^*_j|m$. Assume that 
\begin{equation}\label{eq:factorization}
\exp(2\pi m^*_{j}/\alpha_j)  = \prod_{p\in \mathcal{A}_j} p^{k_{jp}}
\end{equation}
for some integers $k_{jp}\ne 0$ and some finite set of primes $\mathcal{A}_j$. Moreover, let $p_j^*$ be the least prime number in the set $A_j$ and put $\mathcal{P}_j = \{p_j^*\}$. Let us notice that the choice of $\mathcal{P}_j$ implies that it is a minimal set such that $\alpha_j \sum_{p\in M_j} h_{jp}\tfrac{\log p}{2\pi}\notin\mathbb{Q}$ for every non-zero sequence of integers $h_{jp}$ and a finite set of primes $M_j$ disjoint to $\mathcal{P}_j$,  since otherwise there exist integers $m,l$ such that $\exp(2\pi m/\alpha_j)=\prod_{p\in M_j}p^{lh_{jp}}\in\mathbb{Q}$, which, by the definition of $m^*_j$, is a power of $\prod_{p\in\mathcal{A}_j}p^{k_{jp}}$, and we get a contradiction. 

Hence, arguing similarly to the proof of Theorem \ref{thm:univNonInt} (see \cite[Theorem 3.5 and Exercise 3.7, p.31]{KN}), the curve
\[
\gamma^*(t) = \left(\left(\gamma_j(q^*t)\frac{\log p}{2\pi k_{jp_j^*}}\right)_{p\in\mathcal{A}_j\setminus\mathcal{P}_j},\left(\gamma_j(q^*t)\frac{\log p}{2\pi}\right)_{p\in M_j}\right)_{1\leq j\leq n},
\]
is uniformly distributed mod $1$ for every finite sets of primes $M_j$ disjoint to $\mathcal{P}_j$, where $q^*$ is the least common multiple of all $k_{jp_j^*}$ for $j$ satisfying $a_j\in\mathbb{Z}$ and $b_j=0$. If $a_j\not\in\mathbb{Z}$ or $b_j\ne 0$ for all $j=1,2,\ldots,n$, then $q^*=1$.

Therefore, applying Lemma \ref{lem:finiteProdDiscrete} for
\[
f^*_j(s)=\prod_{p\in A_j}\left(1-\frac{\chi_j(s)}{p^s}\right)f_j(s)
\]
instead of $f_j(s)$, gives that the number of integers $k\in [2,N]$ satisfying
\[
\max_{1\leq j\leq n}\max_{s\in K}\left|L_{\{\mathcal{A}_j\not\ni p:p\leq y\}}(s+i\gamma_j(q^*k),\overline{0};\chi_j) - f^*_j(s)\right|<\varepsilon
\]
\[
\max_{1\leq j\leq n}\max_{p\in \mathcal{A}_j\setminus\mathcal{P}_j}\left\Vert\gamma_j(q^*k)\frac{\log p}{2\pi k_{jp_j^*}}\right\Vert<\varepsilon
\]
is at least $cN$. The second inequality together with \eqref{eq:factorization} gives that
\[
\max_{1\leq j\leq n}\max_{p\in \mathcal{A}_j\setminus\mathcal{P}_j}\left\Vert\gamma_j(q^*k)\frac{\log p}{2\pi}\right\Vert\ll\varepsilon
\]
and, for every $j$ satisfying $a_j\in\mathbb{Z}$, $b_j=0$,
\[
\left\Vert\gamma_j(q^*k)\frac{\log p_j^*}{2\pi}\right\Vert=\left\Vert\frac{m_j^*}{\alpha_jk_{jp_j^*}}\gamma_j(q^*k)+\sum_{p\in\mathcal{A}_j\setminus\mathcal{P}_j}\gamma_j(q^*k)\frac{\log p}{2\pi k_{jp_j^*}}\right\Vert\ll \varepsilon,
\]
since $\gamma_j(q^*k)/\alpha_j=(q^*)^{a_j}k^{a_j}$ is a multiple of $k_{jp_j^*}$ by the definition of $q^*$.

Thus
\[
\prod_{p\in \mathcal{A}_j}\left(1-\frac{\chi_j(p)}{p^{s+i\gamma_j(q^*k)}}\right)^{-1}f_j(s)
\]
approximates $f_j^*(s)$ uniformly on $K$, and hence
\[
\max_{1\leq j\leq n}\max_{s\in K}\left|L_{\{ p:p\leq y\}}(s+i\gamma_j(q^*k),\overline{0};\chi_j) - f_j(s)\right|\ll \varepsilon.
\]
Moreover, by replacing $q^*k$ by $k$, one can easily observe that the number of integers $k\in [2,N]$ satisfying
\[
\max_{1\leq j\leq n}\max_{s\in K}\left|L_{\{ p:p\leq y\}}(s+i\gamma_j(k),\overline{0};\chi_j) - f_j(s)\right|\ll \varepsilon
\]
is at least $cN/q^*$, which together with Proposition \ref{prop:meanValueGeneralDiscrete} and Lemma \ref{lem:2kmomentsApp}, complete the proof.
\end{proof}

\paragraph{\bf Acknowledgment} The author would like to cordially thank to Professor Kohji Matsumoto for his valuable comments and  suggestions.

\end{document}